\newcommand{\G}{\mathcal{G}}
\DeclareMathOperator{\Hom}{Hom}
\DeclareMathOperator{\Ext}{Ext}
\newcommand{\NN}{\mathbb{N}}
\DeclareMathOperator{\Ind}{Ind}
\newcommand{\opH}{\text{\rm H}}
\newcommand{\la}{\langle}
\newcommand{\ra}{\rangle}
\begin{document}

\newcounter{rownum}
\setcounter{rownum}{0}

\newtheorem{lemma}{Lemma}[section]
\newtheorem{theorem}[lemma]{Theorem}
\newtheorem*{TA}{Theorem A}
\newtheorem*{TB}{Theorem B}
\newtheorem*{TC}{Theorem C}
\newtheorem*{C3}{Corollary 3}
\newtheorem*{T4}{Theorem 4}
\newtheorem*{C5}{Corollary 5}
\newtheorem*{claim}{Claim}
\newtheorem{corollary}[lemma]{Corollary}
\newtheorem{conjecture}[lemma]{Conjecture}
\newtheorem{prop}[lemma]{Proposition}
\theoremstyle{remark}
\newtheorem{remark}[lemma]{Remark}
\newtheorem{obs}[lemma]{Observation}
\theoremstyle{definition}
\newtheorem{defn}[lemma]{Definition}

  \def\hal{\unskip\nobreak\hfil\penalty50\hskip10pt\hbox{}\nobreak
  \hfill\vrule height 5pt width 6pt depth 1pt\par\vskip 2mm}

\newenvironment{changemargin}[1]{%
  \begin{list}{}{%
    \setlength{\topsep}{0pt}%
    \setlength{\topmargin}{#1}%
    \setlength{\listparindent}{\parindent}%
    \setlength{\itemindent}{\parindent}%
    \setlength{\parsep}{\parskip}%
  }%
  \item[]}{\end{list}}

\parindent=0pt
\addtolength{\parskip}{0.5\baselineskip}

 \title[Ree analogue]{On extensions for Ree groups of type $F_4$}

\author{David I. Stewart}
\address{New College, Oxford\\ OX1 3BN, UK}
\email{david.stewart@new.ox.ac.uk {\text{\rm(Stewart)}}}

\pagestyle{plain}
\begin{abstract}Let $k$ be an algebraically closed field of characteristic $p=2$. Let $G=F_4$ be simply connected over $k$ and let $\sigma:G\to G$ be an endomorphism such that the fixed point set $G(\sigma)$ is a Ree group. We show, using the methods of \cite{BNP06}, that self-extensions of simple $kG(\sigma)$ modules vanish generically and that for all but the first few Ree groups of type $F_4$, the $1$-cohomology for $G(\sigma)$ with coefficients in a simple $kG$-module can be identified with the $1$-cohomology for $G$ with coefficients in a (possibly different) simple $G$-module. \end{abstract}
\maketitle
\section{Introduction}
Let $G$ be a simple algebraic group over an algebraically closed field $k$ of characteristic $p>0$ and let $\sigma:G\to G$ be a surjective endomorphism of $G$ such that the fixed point set $G(\sigma):=G(k)^\sigma$ is a finite group. Then $G(\sigma)$ is in fact a finite group of Lie type. In \cite{BNP06} the authors investigate the connection between the $1$-cohomology for $G$ and $G(\sigma)$ in case $G(\sigma)$ is a Chevalley or Steinberg group. They exclude from consideration the Ree and Suzuki groups. As the authors explain, since their results involve comparing extensions between simple modules, there was little point dealing with the Ree groups of type $G_2$ or the Suzuki groups, since the exact computation of these extensions had been completed in papers of Sin: \cite{Sin92} and \cite{Sin93}. However there remains a gap, since there are no general results for the Ree groups of type $F_4$. This paper exists to fill that gap with results in the flavour of \cite{BNP06}.

There are three theorems in [\emph{ibid.}] for which we wish to create analogues for the Ree groups $^2F_4(q)$. They are \begin{enumerate}\item {[\emph{ibid.},Thm.~5.4]} which shows that, generically, self-extensions between simple $kG(\sigma)$ modules vanish.
\item {[\emph{ibid.},Thm.~5.5]} which shows that with finitely many exceptions depending only on the root system $\Phi(G)$, the $1$-cohomology $\opH^1(G(\sigma),L)$ of a simple $kG(\sigma)$-modules $L$ can be identified with the $1$-cohomology $\opH^1(G,M)$ of a possibly different simple $G$-module $M$.
\item {[\emph{ibid.},Thm.~5.6]} which is an analogue of the above for degree $1$ $kG(\sigma)$-extensions between simple modules.\end{enumerate}

The analogues of these will be our theorems \ref{self} and \ref{ext} below. (The latter incorporates (ii) as a special case of (iii).) We will not give the exact statements in this introduction. 

It is unsurprising that some analogue must exist and our proofs differ hardly at all from those in \cite{BNP06} (though we are able to abbreviate significantly using some new technology from \cite{BNPPSS}). The main industry here is establishing out how to reduce the various components of the many spectral sequence calculations to those involving only the classical Frobenius kernels `$G_r$' with $r\in \mathbb N$.

\subsection*{Acknowledgements}Many thanks to C.~ Pillen for discussions.

\section{Notation}
Assume the notation in \cite{Jan03}. We supplant as necessary with notation from \cite{BNPPSS}, given below. The reader is warned that the notation in the other papers cited is more-or-less pair-wise inconsistent.

For the remainder of the paper, let $G$ be a group of type $F_4$ defined over an algebraically closed field $k$ of characteristic $p=2$. Let $\tau$ be the special isogeny $G\to G$ satisfying $\tau^2=F$, the Frobenius endomorphism on $G$. We denote the scheme-theoretic kernel of $\tau$ by $G_{1/2}$; for an even integer $r=2s$, we denote the kernel $G_\sigma$ of $\sigma=F^{s}$ by $G_{r/2}=G_{s}$ and if $r=2s+1$ is odd, the kernel $G_\sigma$ of $\sigma=\tau\circ F^s$ by $G_{r/2}=G_{s+1/2}$. Also when $r$ is odd, denote by $G(\sigma)=G(r/2)=G(s+1/2)$ the group of fixed points $G(k)^{\tau\circ F^s}$. The finite group $G(s+1/2)$ is a Ree group $^2F_4(q)$. If $V$ is a $G$-module, denote by $V^{[\sigma]}=V^{[r/2]}$ the twist of the module by precomposing the action map with one of the $G$-endomorphisms $\sigma=F^s$ or $\sigma=\tau\circ F^s$ according as $r$ is even or odd. Similarly, if $G_\sigma=G_{r/2}$ acts trivially on $V$, we may define an untwist $V^{[\sigma^{-1}]}=V^{[-r/2]}$.

Let $X^+$ be the set of dominant weights of $G$ and  denote by $X_\sigma$ the subset of $X^+$ defined by weights which are $\sigma$-restricted. In case $r$ is even, $X_\sigma=X_{r/2}$; when $r=2s+1$ is odd, then the condition that $\lambda\in X^+$ be $\sigma$-restricted is that $\langle\lambda, \alpha^\vee\rangle < p^{s+1}$ for $\alpha\in\Pi$ short, and $< p^s$ in case $\alpha\in\Pi$ is long, where $\Pi$ denotes the set of simple roots of $G$. Any dominant weight $\lambda$ can be uniquely written as $\lambda+\sigma^*\lambda_1$, where $\lambda_0$ is $\sigma$-restricted and $\lambda_1\in X^+$. Here $\sigma^* : X \to X$ is the restriction to $X \subset k[T]$ of the comorphism $\sigma^*$ of $\sigma$. Then $L(\lambda) \cong L(\lambda_0) \otimes L(\lambda_1)^{[\sigma]}$, an analogue of SteinbergÕs tensor product theorem.

We make significant use of spectral sequences \[E_2^{ij}:=\Ext^i_{H_1/H_2}(k,\Ext_{H_2}^j(M,N))\Rightarrow \Ext_{H_1}^{i+j}(M,N),\]where $H_2=G_{s/2}$ for $s\in\mathbb N$ and $H_1$ is either $G$ or $G_{r/2}$ with $r\geq s$. See \cite[Remark 2.2.1(a)]{BNPPSS} for a discussion.

\section{Results}

Set $\G(k)=\Ind_{G(\sigma)}^G$. This is an infinite dimensional module as the coset space $G/G(\sigma)$ is affine. 

\begin{theorem}\label{filtration}The $G$-module $\G(k)$ has a filtration with sections $H^0(\lambda)\otimes H^0(\lambda^*)^{[\sigma]}$, one for each $\lambda\in X^+$ and occurring in an order compatible with the dominance order on $X^+$.\end{theorem}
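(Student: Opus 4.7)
The plan is to realise $\G(k)$ as the coordinate ring $k[G]$ equipped with a $\sigma$-twisted $G$-action, and then apply Donkin's good filtration of this algebra.

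First, I would observe that since $G(\sigma)$ is a finite (reduced) subgroup of $G$, the module $\G(k) = \Ind_{G(\sigma)}^G k$ is simply $k[G/G(\sigma)]$ with its left translation $G$-action. Because $\sigma$ is by construction a Steinberg endomorphism (either $F^s$ or $\tau\circ F^s$, whose squares are standard Frobenius maps), the Lang--Steinberg theorem guarantees that the Lang map $L_\sigma:G\to G$, $g\mapsto g\sigma(g)^{-1}$, is surjective with fibres the right $G(\sigma)$-cosets. Since $d\sigma$ vanishes on $\Lie(G)$, this map is étale, and so descends to an isomorphism of varieties $G/G(\sigma)\xrightarrow{\sim} G$. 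Transporting structure, the left translation action of $G$ on $G/G(\sigma)$ becomes the $\sigma$-twisted conjugation action $h\cdot x = hx\sigma(h)^{-1}$ on $G$. Dualising, $\G(k)\cong k[G]$ as $G$-modules, with $G$ acting as the restriction of the usual $G\times G$-bimodule structure on $k[G]$ (left and right translation) along the twisted diagonal $h\mapsto (h,\sigma(h))$.

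Next, I would invoke Donkin's theorem (\cite[II.4.20]{Jan03}): $k[G]$ carries a $G\times G$-module filtration indexed by $X^+$ in an order refining the dominance order, whose successive quotients are $H^0(\lambda)\otimes H^0(\lambda^*)$. Pulling this filtration back through $h\mapsto (h,\sigma(h))$ preserves exactness and turns each section into $H^0(\lambda)\otimes H^0(\lambda^*)^{[\sigma]}$: the twist lands on the second factor because it is there that the $\sigma$ in $(h,\sigma(h))$ acts, and this precisely matches the definition of $[\sigma]$ given in Section~2.

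The hard part will be verifying carefully that the Lang--Steinberg machinery applies for the non-standard endomorphism $\sigma=\tau\circ F^s$: one must check surjectivity of $L_\sigma$ and the étaleness condition using the relation $\tau^2=F$, and then track the twist through to confirm that the transported action produces exactly the functor $(-)^{[\sigma]}$ defined via the comorphism $\sigma^*$. This is essentially folklore, but needs recording in the present setting. Once established, the argument is a formal transport of structure and proceeds exactly as in the Chevalley and Steinberg cases treated in \cite{BNP06}.
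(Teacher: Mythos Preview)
Your argument is correct and is precisely the standard one the paper defers to via \cite[Prop.~3.1.2]{BNPPSS}: identify $\G(k)\cong k[G]$ under the $\sigma$-twisted conjugation action via the Lang isogeny, then restrict Donkin's $G\times G$ good filtration of $k[G]$ along the twisted diagonal. One small correction: when $s=0$ (so $\sigma=\tau$), the differential $d\sigma=d\tau$ does \emph{not} vanish on $\Lie(G)$---its kernel is only $\Lie(G_{1/2})$---but since $(d\tau)^2=dF=0$ the map $d\tau$ is nilpotent, hence $1-d\tau$ is invertible and $L_\sigma$ is still \'etale as required.
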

\begin{proof}This is proved in \cite[Prop.~3.1.2]{BNPPSS} and the remarks at the beginning of [{\it ibid.}, \S3.2].\end{proof}

Following \cite[\S2.4]{BNPtw}, define $\G_\Omega(k)$ as a certain truncation of $\G(k)$; specifically it is the largest submodule of $\G(k)$ whose high weights are in the set $\{\nu\in X^+|\langle\nu,\alpha_0^\vee\rangle<\langle 2(\sigma^*-1)\rho,\alpha_0^\vee\rangle\}$. By the above theorem, $\G_\Omega(k)$ also has sections $H^0(\lambda)\otimes H^0(\lambda^*)^{[\sigma]}$. Using $\langle\rho,\alpha_0^\vee\rangle=h-1$, and that $\la\lambda,\alpha_0^\vee\ra<\la\mu,\alpha_0^\vee\ra\iff\la\sigma^*\lambda,\alpha_0^\vee\ra<\la\sigma^*\mu,\alpha_0^\vee\ra$, we see that $\G_\Omega(k)$ has only $|\Gamma|$ sections,  one for each $\lambda\in \Gamma$ where $\Gamma=\{\nu\in X^+|\langle\nu,\alpha_0^\vee\rangle< 2(h-1)\}$.

\begin{theorem}\label{isom}For any $\lambda,\mu\in X_\sigma$, we have \[\Ext^1_{G(\sigma)}(L(\lambda),L(\mu))\cong \Ext^1_{G}(L(\lambda),L(\mu)\otimes \G_\Omega(k)).\]\end{theorem}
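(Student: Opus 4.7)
The plan is first to apply Frobenius reciprocity together with the tensor identity. Since $L(\mu)$ is a $G$-module, $\Ind^G_{G(\sigma)}\!\bigl(L(\mu)|_{G(\sigma)}\bigr) \cong L(\mu)\otimes\G(k)$, so
\[\Ext^1_{G(\sigma)}(L(\lambda),L(\mu)) \cong \Ext^1_G\bigl(L(\lambda),L(\mu)\otimes\G(k)\bigr).\]
The task then reduces to showing that replacing $\G(k)$ by its truncation $\G_\Omega(k)$ does not change this $\Ext^1$.

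To do this, I would use the short exact sequence $0\to\G_\Omega(k)\to\G(k)\to Q\to 0$ and its long exact sequence in $\Ext^*_G(L(\lambda),L(\mu)\otimes -)$. By Theorem~\ref{filtration}, $Q$ carries a $G$-filtration with sections $H^0(\nu)\otimes H^0(\nu^*)^{[\sigma]}$ indexed by $\nu\in X^+\setminus\Gamma$, i.e.\ by those $\nu$ with $\langle\nu,\alpha_0^\vee\rangle\geq 2(h-1)$. A devissage along this filtration reduces the theorem to the vanishings
\[\Ext^i_G\bigl(L(\lambda),L(\mu)\otimes H^0(\nu)\otimes H^0(\nu^*)^{[\sigma]}\bigr)=0,\qquad i=0,1,\]
for every such $\nu$ and every pair $\lambda,\mu\in X_\sigma$.

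For these vanishings, I would invoke the Lyndon--Hochschild--Serre spectral sequence for $G_\sigma\triangleleft G$,
\[E_2^{ij}=\Ext^i_{G/G_\sigma}\!\bigl(k,\,\Ext^j_{G_\sigma}(L(\lambda),L(\mu)\otimes H^0(\nu))\otimes H^0(\nu^*)^{[\sigma]}\bigr)\Rightarrow\Ext^{i+j}_G\bigl(L(\lambda),L(\mu)\otimes H^0(\nu)\otimes H^0(\nu^*)^{[\sigma]}\bigr),\]
which is valid because $H^0(\nu^*)^{[\sigma]}$ is $G_\sigma$-trivial. After untwisting the outer factor through the isomorphism $G/G_\sigma\cong G$ induced by $\sigma$, it remains to bound the highest weights of the composition factors of $\Ext^j_{G_\sigma}(L(\lambda),L(\mu)\otimes H^0(\nu))$ and to argue that these cannot pair with $H^0(\nu^*)$ at $E_2^{ij}$ with $i+j\leq 1$ once $\langle\nu,\alpha_0^\vee\rangle\geq 2(h-1)$. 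This is the direct analogue of the arguments in \cite[§3]{BNPPSS}.

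The main obstacle will be the weight bookkeeping on the inner $\Ext^j_{G_\sigma}$-term. Because $\sigma=\tau\circ F^s$ scales long and short roots by different powers of $p$, the estimates used in \cite{BNP06} and \cite{BNPPSS} for the pure Frobenius case do not apply verbatim. The notion of $\sigma$-restrictedness, and the definition of $\Gamma$, both reflect this half-integral behaviour, and I expect the technical core to be verifying that the BNPPSS-style weight bounds survive the substitution of $\sigma^*$ for $F^r$ on the relevant characters, in particular ensuring that the constant $2(h-1)$ defining $\Gamma$ is exactly what is needed to force the $E_2^{ij}$-vanishings for $i+j\leq 1$.
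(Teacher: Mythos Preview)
The paper does not give an argument here at all: its entire proof is the single sentence ``This is \cite[Thm.~2.4]{BNPtw}.'' So you are not matching the paper's proof, you are reconstructing the proof of the cited reference.

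Your outline is the correct one and is exactly what lies behind the cited result: exactness of induction from $G(\sigma)$ (since $G/G(\sigma)$ is affine) together with the tensor identity gives $\Ext^1_{G(\sigma)}(L(\lambda),L(\mu))\cong\Ext^1_G(L(\lambda),L(\mu)\otimes\G(k))$, and one then shows that the sections of $\G(k)/\G_\Omega(k)$, namely the $H^0(\nu)\otimes H^0(\nu^*)^{[\sigma]}$ with $\nu\notin\Gamma$, contribute nothing in degrees $0$ and $1$. Your worry that the half-integral $\sigma$ requires a separate weight analysis is precisely the point the paper is finessing by citation: the reference \cite{BNPtw} (and the framework of \cite{BNPPSS}) is written for a general Steinberg endomorphism $\sigma$, so the bound defining $\G_\Omega(k)$ is already stated in terms of $\sigma^*$ rather than $F^r$, and the required vanishings are established there uniformly. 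In other words, no new argument is needed for the Ree case at this step, which is why the paper simply cites the result.
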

\begin{proof} This is \cite[Thm.~2.4]{BNPtw}\end{proof}

The following lemma will be used in various spectral sequence calculations.

\begin{lemma}\label{lemma} Let $r\in \NN$ and set $s=[r/2]$. Let $\lambda,\mu\in X_{r/2}$ and let $\nu\in X^+$ satisfy $\langle\nu,\alpha_0^\vee\rangle< p^s$. Then the $G$-module $\Hom_{G_{r/2}}(L(\lambda),L(\mu)\otimes H^0(\nu))$ has trivial $G$-structure; that is, it is isomorphic to $\Hom_{G}(L(\lambda),L(\mu)\otimes H^0(\nu))$\end{lemma}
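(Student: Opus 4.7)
The plan is to show that every $G$-composition factor of
\[V:=\Hom_{G_{r/2}}(L(\lambda),L(\mu)\otimes H^0(\nu))\]
is trivial. Since the $G$-action on $V$ factors through $G/G_{r/2}$, which by the isogeny $\sigma$ (equal to $F^s$ when $r=2s$ is even and to $\tau\circ F^s$ when $r=2s+1$ is odd) is identified with $G$, the module $V$ is of the form $M^{[\sigma]}$ for some $G$-module $M$; the claim is equivalent to $M$ being trivial, which is in turn equivalent to $V=V^G=\Hom_G(L(\lambda),L(\mu)\otimes H^0(\nu))$.

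First I would embed $V$ into $L(\lambda)^*\otimes L(\mu)\otimes H^0(\nu)$ as its $G_{r/2}$-invariants and analyse the $G$-composition factors of the ambient tensor product. Writing each composition factor $L(\eta)$ in its $\sigma$-Steinberg decomposition $\eta=\eta_0+\sigma^*\eta_1$ with $\eta_0\in X_{r/2}$ (recalled in Section~2), we have $L(\eta)\cong L(\eta_0)\otimes L(\eta_1)^{[\sigma]}$, so that $L(\eta)^{G_{r/2}}=L(\eta_1)^{[\sigma]}$ when $\eta_0=0$ and vanishes otherwise. This reduces the problem to showing that no $L(\sigma^*\eta_1)$ with $\eta_1\neq 0$ occurs as a $G$-composition factor of $L(\lambda)^*\otimes L(\mu)\otimes H^0(\nu)$.

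Next I would treat the even case $r=2s$ by the standard Frobenius-kernel technique: $H^0(\nu)|_{G_s}\cong\Ind_{B_s}^{G_s}k_\nu$ (legitimate since the hypothesis $\langle\nu,\alpha_0^\vee\rangle<p^s$ forces $\nu\in X_s$), whence by Frobenius reciprocity $\Hom_{G_s}(L(\xi),H^0(\nu))=k\cdot\delta_{\xi,\nu}$ for $\xi\in X_s$. Combined with Steinberg's theorem applied to the $G$-composition factors of $L(\lambda)\otimes L(\mu)^*$, this forces the only surviving contribution to come from $\eta_1=0$; this is a standard computation in the spirit of \cite[II.10]{Jan03} and \cite{BNP06}. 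For the odd case $r=2s+1$, I would either argue analogously using the $\sigma$-Steinberg decomposition and the corresponding socle computation for $H^0(\nu)|_{G_{s+1/2}}$, or apply the Lyndon--Hochschild--Serre spectral sequence
\[E_2^{i,j}=\Ext^i_{G_{s+1/2}/G_s}(k,\Ext^j_{G_s}(L(\lambda),L(\mu)\otimes H^0(\nu)))\Rightarrow\Ext^{i+j}_{G_{s+1/2}}(L(\lambda),L(\mu)\otimes H^0(\nu))\]
for the normal inclusion $G_s\subset G_{s+1/2}$, using the even case to pin down the inner Hom at $(i,j)=(0,0)$.

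The hard part will be the composition-factor analysis in the odd case, where $\lambda,\mu\in X_{s+1/2}$ are not assumed $F^s$-restricted and the $\tau^*$-component of $\sigma^*$ must be tracked carefully. The threshold $\langle\nu,\alpha_0^\vee\rangle<p^s$ is calibrated precisely so that the dominance-weight bounds on the ambient tensor product, combined with the above socle computation, exclude nontrivial twisted composition factors uniformly in both parities.
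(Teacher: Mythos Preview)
Your strategy diverges substantially from the paper's, and as written it contains two genuine gaps.

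First, your reduction step overclaims. You say it suffices to show that no $L(\sigma^*\eta_1)$ with $\eta_1\neq 0$ occurs as a $G$-composition factor of the \emph{ambient} tensor $L(\lambda)^*\otimes L(\mu)\otimes H^0(\nu)$. That is sufficient but generally false: for $\lambda,\mu\in X_{r/2}$ one only has $\langle\lambda,\alpha_0^\vee\rangle,\langle\mu,\alpha_0^\vee\rangle\leq (h-1)(p^s-1)$, so weights of size $\geq p^s$ (hence nontrivial $\sigma$-twisted composition factors) certainly occur in the ambient module. What you actually need is that no such factor occurs in the \emph{submodule} $V=(L(\lambda)^*\otimes L(\mu)\otimes H^0(\nu))^{G_{r/2}}$, and this requires using that $V$ consists of $G_\sigma$-homomorphisms, not merely $G_\sigma$-invariant vectors in an abstract module.

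Second, your even-case input is wrong: $H^0(\nu)|_{G_s}$ is \emph{not} isomorphic to $\Ind_{B_s}^{G_s}k_\nu$ for general $\nu\in X_s$ (take $\nu=0$: the left side is $k$, the right side is $k[G_s/B_s]$). So the socle identity $\Hom_{G_s}(L(\xi),H^0(\nu))=k\cdot\delta_{\xi,\nu}$ is not available by that route, and the ``standard computation'' you allude to does not go through as stated. Your LHS reduction from $G_{s+1/2}$ to $G_s$ for the odd case is salvageable in principle, but it rests on the even case, so the whole argument collapses.

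By contrast, the paper's proof is a four-line weight bound that works uniformly for both parities. After swapping $\lambda$ and $\mu$ if necessary so that $\langle\mu,\alpha_0^\vee\rangle\leq\langle\lambda,\alpha_0^\vee\rangle$, observe that any composition factor of $V$ is of the form $L(\gamma)^{[\sigma]}$; a highest weight vector of weight $\sigma^*\gamma$ in $V$ is a nonzero $G_\sigma$-map $L(\lambda)\to L(\mu)\otimes H^0(\nu)$ of $T$-weight $\sigma^*\gamma$, so sends the highest weight vector of $L(\lambda)$ to a nonzero vector of weight $\lambda+\sigma^*\gamma$ in $L(\mu)\otimes H^0(\nu)$. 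Hence $\langle\lambda+\sigma^*\gamma,\alpha_0^\vee\rangle\leq\langle\mu+\nu,\alpha_0^\vee\rangle\leq\langle\lambda,\alpha_0^\vee\rangle+\langle\nu,\alpha_0^\vee\rangle$, giving $p^s\langle\gamma,\alpha_0^\vee\rangle\leq\langle\sigma^*\gamma,\alpha_0^\vee\rangle\leq\langle\nu,\alpha_0^\vee\rangle<p^s$ and thus $\gamma=0$. The key point you missed is that a weight vector of $V$ is itself a homomorphism, so evaluation at the highest weight vector of $L(\lambda)$ lets you trade $\langle\lambda^*,\alpha_0^\vee\rangle$ on one side for $\langle\lambda,\alpha_0^\vee\rangle$ on the other, which is exactly what makes the bound sharp enough.
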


\begin{proof}When $r$ is even this is \cite[Prop.~3.1]{BNP06}---we follow the same argument when $r$ is odd. Dualising if necessary, we may assume $\langle\mu,\alpha_0^\vee\rangle\leq \langle\lambda,\alpha_0^\vee\rangle$. All $G$-composition factors of $\Hom_{G_\sigma}(L(\lambda),L(\mu)\otimes H^0(\nu))$ are $G_\sigma$-trivial so must be of the form $L(\gamma)^{[\sigma]}$. For such a factor, $\lambda+\sigma^*\gamma$ is a weight of $L(\mu)\otimes H^0(\nu)$ and hence \[\la\lambda+\sigma^*\gamma,\alpha_0^\vee\ra\leq \la\mu+\nu,\alpha_0^\vee\ra\leq\la\lambda,\alpha_0^\vee\ra+\la\nu,\alpha_0^\vee\ra.\]
Hence, $p^s\la\gamma,\alpha_0^\vee\ra\leq
\la\sigma^*\gamma,\alpha_0^\vee\ra
\leq \la\nu,\alpha_0^\vee\rangle<p^s$. Hence $\gamma=0$.
 \end{proof}
 
 The following is an analogue of \cite[Prop.~5.3]{BNP06} for our situation. Our bounds on $s=[r/2]$ are rather worse than one would expect from exact analogue. This is because we have not been able, as in \cite[Lemma 5.2]{BNP06}, to show that the $|\Gamma|$ sections in $\G_\Omega(k)$ can be contracted to the smaller collection corresponding to $\Gamma_h$ used in [{\it ibid.}]. Ultimately this is due to a lack of the explicit determination of $\opH^1(G_{r/2},H^0(\lambda))$ made when $r$ is even in \cite{BNP04-Frob}.
 
 \begin{prop}\label{prop}Let $r\geq 19$, so that $s\geq 9$. Let $\lambda,\mu\in X_\sigma$ and $\Gamma'=\Gamma-\{0\}$.
 
 (a) We have
 \[\Ext^1_{G(\sigma)}(L(\lambda),L(\mu))\hookrightarrow \Ext^1_G(L(\lambda),L(\mu))\oplus R\] where \begin{align*}R&\cong\bigoplus_{\nu\in\Gamma'}\Ext^1_G(L(\lambda)\otimes V(\nu)^{[\sigma]},L(\mu)\otimes H^0(\nu))\\
 &\cong \bigoplus_{\nu\in \Gamma'}\Hom_{G/G_\sigma}(V(\nu)^{[\sigma]},\Ext^1_{G_\sigma}(L(\lambda),L(\mu)\otimes H^0(\nu)))\end{align*}
 
 (b) Let $5\leq t\leq s-4$. Set $\lambda=\lambda_0+p^t\lambda_1$ and $\mu=\mu_0+p^t\mu_1$ with $\lambda_0,\mu_0\in X_t$ and $\lambda_1,\mu_1\in X_{r/2-t}$. Then we may reidentify $R$ as
 \begin{align*}R&\cong \bigoplus_{\nu\in \Gamma'}\Ext^1_G(L(\lambda_1)\otimes V(\nu)^{[r/2-t]},L(\mu_1))\otimes \Hom_G(L(\lambda_0),L(\mu_0)\otimes H^0(\nu))\\&\cong
 \bigoplus_{\nu\in \Gamma'}\Hom_G(V(\nu)^{[r/2-t]},\Ext^1_{G_{r/2-t}}(L(\lambda_1),L(\mu_1)))\otimes \Hom_G(L(\lambda_0),L(\mu_0)\otimes H^0(\nu))\end{align*}
 \end{prop}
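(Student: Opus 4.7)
The starting point is Theorem \ref{isom}, which identifies $\Ext^1_{G(\sigma)}(L(\lambda),L(\mu))$ with $\Ext^1_G(L(\lambda),L(\mu)\otimes \G_\Omega(k))$. Applying $\Ext^\bullet_G(L(\lambda),L(\mu)\otimes -)$ to the filtration of $\G_\Omega(k)$ from Theorem \ref{filtration} and assembling the long exact sequences produces an embedding of the right-hand side into $\bigoplus_{\nu\in\Gamma}\Ext^1_G(L(\lambda),L(\mu)\otimes H^0(\nu)\otimes H^0(\nu^*)^{[\sigma]})$. The $\nu=0$ summand is $\Ext^1_G(L(\lambda),L(\mu))$; for each $\nu\in\Gamma'$, using $H^0(\nu^*)\cong V(\nu)^*$ and then shifting $V(\nu)^{[\sigma]}$ to the first argument of Ext via Hom--tensor adjunction yields the first description of $R$ in~(a).

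For the second description of $R$, I would run, for each $\nu\in\Gamma'$, the Lyndon--Hochschild--Serre spectral sequence
\[E_2^{ij}=\Ext^i_{G/G_\sigma}\bigl(k,\Ext^j_{G_\sigma}(L(\lambda)\otimes V(\nu)^{[\sigma]},L(\mu)\otimes H^0(\nu))\bigr)\Rightarrow\Ext^{i+j}_G(L(\lambda)\otimes V(\nu)^{[\sigma]},L(\mu)\otimes H^0(\nu)).\]
Because $V(\nu)^{[\sigma]}$ is $G_\sigma$-trivial, it pulls out of the inner Ext as a factor of $V(\nu)^{[\sigma]*}$, so $E_2^{0,1}$ takes the asserted form $\Hom_{G/G_\sigma}(V(\nu)^{[\sigma]},\Ext^1_{G_\sigma}(L(\lambda),L(\mu)\otimes H^0(\nu)))$. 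The hypothesis $s\geq 9$ gives $\la\nu,\alpha_0^\vee\ra<2(h-1)=22<2^s$, so Lemma \ref{lemma} makes $\Hom_{G_\sigma}(L(\lambda),L(\mu)\otimes H^0(\nu))$ a trivial $G$-module; $E_2^{1,0}$ then reduces to $\Ext^1_G(V(\nu),k)\otimes\Hom_G(\ldots)$, which vanishes by Ext-vanishing between Weyl modules and induced modules. This identifies the two descriptions of $R$ in~(a).

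For~(b), Steinberg's tensor product theorem writes $L(\lambda)=L(\lambda_0)\otimes L(\lambda_1)^{[t]}$ and $L(\mu)=L(\mu_0)\otimes L(\mu_1)^{[t]}$. I apply the LHS spectral sequence for the kernel $G_t$ to each summand of $R$. Because $t\leq s$, all three of $L(\lambda_1)^{[t]}$, $L(\mu_1)^{[t]}$, $V(\nu)^{[\sigma]}$ are $G_t$-trivial and pull out of the inner $\Ext^j_{G_t}$; applying Lemma \ref{lemma} with $r=2t$ (which needs $22<2^t$, hence the bound $t\geq 5$) makes $\Hom_{G_t}(L(\lambda_0),L(\mu_0)\otimes H^0(\nu))$ equal to $\Hom_G(L(\lambda_0),L(\mu_0)\otimes H^0(\nu))$ with trivial $G$-structure. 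Untwisting by $F^t$ then identifies the surviving $E_2^{1,0}$ term with $\Ext^1_G(L(\lambda_1)\otimes V(\nu)^{[r/2-t]},L(\mu_1))\otimes\Hom_G(L(\lambda_0),L(\mu_0)\otimes H^0(\nu))$, giving the first expression of~(b). A further LHS spectral sequence for $G_{r/2-t}$, applied to the inner Ext and combined with Lemma \ref{lemma} (in the trivial-$\nu$ case) and Ext-vanishing between Weyl and induced modules, converts the first expression of~(b) into the second.

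The principal technical obstacle is to upgrade the embeddings coming out of the five-term exact sequences to the genuine isomorphisms for $R$ asserted in the statement. This requires showing that the relevant $d_2$-differentials $E_2^{0,1}\to E_2^{2,0}$ vanish, and, in (b), that the $E_2^{0,1}$-piece of the $G_t$-spectral sequence does not contribute an independent summand to $R$. The bounds $s\geq 9$ and $5\leq t\leq s-4$ encode what is needed to invoke Lemma \ref{lemma} on both sides of the $G_t$-decomposition; as remarked before the proposition, they are coarser than their analogues in \cite{BNP06} because we have no analogue of \cite[Lemma 5.2]{BNP06} permitting a finer contraction of $\G_\Omega(k)$ to $\Gamma_h$.
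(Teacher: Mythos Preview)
Your overall architecture matches the paper's proof exactly: Theorem~\ref{isom} plus the filtration of $\G_\Omega(k)$ for the embedding in~(a), LHS for $G_\sigma\triangleleft G$ for the second description of $R$, then LHS for $G_t\triangleleft G$ and for $G_{r/2-t}\triangleleft G$ for the two descriptions in~(b). The $E_2^{2,0}$ vanishing you flag in~(a) is handled in the paper by the same observation that kills $E_2^{1,0}$, namely $\Ext^i_G(V(\nu),k)=0$; so that part is fine.

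The genuine gap is the one you label ``principal technical obstacle'' and then leave unresolved: the vanishing of the $E_2^{0,1}$ term in the $G_t$-spectral sequence in part~(b). This is not peripheral---it is the heart of the argument, and your diagnosis of why the bound $t\leq s-4$ is present is incorrect. That bound is \emph{not} there to invoke Lemma~\ref{lemma} ``on both sides'' (Lemma~\ref{lemma} only needs $t\geq 5$, and the $G_{r/2-t}$ step uses no such lemma, only that $\Hom_{G_{r/2-t}}(L(\lambda_1),L(\mu_1))$ is $0$ or $k$ for simple modules). Rather, $t\leq s-4$ is exactly what makes $E_2^{0,1}$ vanish. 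The paper argues as follows: by \cite[(5.2.4)]{BNP06}, any weight $p^t\gamma$ of $\Ext^1_{G_t}(L(\lambda_0),L(\mu_0)\otimes H^0(\nu))$ satisfies
\[
p^t\la\gamma,\alpha_0^\vee\ra<(p^t-1)h+1-\tfrac{p^t}{4}+\la\nu,\alpha_0^\vee\ra,
\]
and the conditions $p^t>p^4\geq h$ and $p^s\geq p^{t+4}$ (i.e.\ $t\leq s-4$) force this to be below $p^s-1+\la\nu,\alpha_0^\vee\ra$. On the other hand, for $\nu\neq 0$ the module $L(\lambda_1)^{[t]}\otimes V(\nu)^{[\sigma]}$ has simple head of highest weight $p^t\lambda_1+p^s\nu'$ with $\nu'\neq 0$, so its $\alpha_0^\vee$-value is at least $p^s+p^t\la\lambda_1,\alpha_0^\vee\ra$. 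After arranging (by dualising) that $\la\mu_1,\alpha_0^\vee\ra\leq\la\lambda_1,\alpha_0^\vee\ra$, a weight comparison forces $E_2^{0,1}=0$. Without this step you have only an embedding of $R$ into the first expression in~(b), not the isomorphism claimed.
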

 
 \begin{proof} Aside from the last isomorphism, part (a) follows immediately from Theorem \ref{isom} and from the description of $\G_\Omega(k)$ given in Theorem \ref{filtration} and the remarks following. 
 
 For the last isomorphism, note that since $p^s\geq 2^{5}>2h-2=22$, the hypotheses of Lemma \ref{lemma} hold. Thus for any weight $\gamma\in \Gamma$ (which satisfies $\la\gamma,\alpha_0^\vee\ra<2h-2$) and any $\lambda,\mu\in X_\sigma$, we see that $\Hom_{G_\sigma}(L(\lambda),L(\mu)\otimes H^0(\gamma))$ has trivial $G$-structure. Now apply the LHS spectral sequence corresponding to $G_\sigma\triangleleft G$ to $E=\Ext^1_G(L(\lambda)\otimes V(\nu)^{[\sigma]},L(\mu)\otimes H^0(\nu))$. The $E_2^{1,0}$ term is \[\Ext^1_{G/G_\sigma}(V(\nu)^{[\sigma]},\Hom_{G_\sigma}(L(\lambda),L(\mu)\otimes H^0(\nu)))\cong \Ext^1_{G}(V(\nu),k)\otimes \Hom_{G_\sigma}(L(\lambda),L(\mu)\otimes H^0(\nu)),\] but $\Ext^1_{G}(V(\nu),k)=0$. So $E_2^{1,0}=0$. Similarly, $E_2^{2,0}=0$, so \[E\cong E_2^{0,1}=\bigoplus_{\nu\in \Gamma'}\Hom_{G/G_\sigma}(V(\nu)^{[\sigma]},\Ext^1_{G_\sigma}(L(\lambda),L(\mu)\otimes H^0(\nu)))\] and we get the identification of $R$ as given. 
 
For (b) write $\lambda$ and $\mu$ as suggested and, without loss of generality (dualising if necessary), assume $\la\mu_1,\alpha_0^\vee\ra<\la\lambda_1,\alpha_0^\vee\ra$. Use LHS corresponding to $G_t\triangleleft G$ on the terms in the first expression for $R$ in (a). We get
\begin{align*}\dim&\Ext^1_G(L(\lambda)\otimes V(\nu)^{[\sigma]},L(\mu)\otimes H^0(\nu))\leq \\
&\dim\Hom_{G/G_t}(L(\lambda_1)^{[t]}\otimes V(\nu)^{[\sigma]},\Ext^1_{G_t}(L(\lambda_0),L(\mu_0)\otimes H^0(\nu))\otimes L(\mu_1)^{[t]})\\
&+\dim\Ext^1_{G/G_t}(L(\lambda_1)^{[t]}\otimes V(\nu)^{[\sigma]},\Hom_{G_t}(L(\lambda_0),L(\mu_0)\otimes H^0(\nu))\otimes L(\mu_1)^{[t]})\end{align*}
Consider the first term, $E_2^{0,1}$, on the right-hand side. As $G_t$ is a classical Frobenius kernel, we have by \cite[(5.2.4)]{BNP06} that any weight $p^t\gamma$ of $\Ext^1_{G_t}(L(\lambda_0),L(\mu_0)\otimes H^0(\nu))$ satisfies $p^t\la\gamma,\alpha_0^\vee\ra<(p^t-1)h+1-\frac{p^t}{4}+\la\nu,\alpha_0^\vee\ra\leq p^s-\frac{5}{4}p^t+1+\la\nu,\alpha_0^\vee\ra<p^s-1+\la\nu,\alpha_0^\vee\ra$. (The second inequality holds since $p^t>p^4\geq h$ and $p^s\geq p^tp^4$.) Recall $V(\nu)^{[\sigma]}=(V(\nu)^{[1/2]})^{[s]}$. Now, if $\nu\neq 0$ the module $L(\lambda_1)^{[t]} \otimes V(\nu)^{[\sigma]}$ has a simple head with high weight $p^t\lambda_1+p^s\nu'$ for some $0\neq\nu'\in X^+$. Comparison of weights forces $E_2^{0,1}=0$.

Consider the other term $E_2^{1,0}$. Since $t\geq 5$, Lemma \ref{lemma} implies $\Hom_{G_t}(L(\lambda_0),L(\mu_0)\otimes H^0(\nu))$ has a trivial $G$-structure. Thus \begin{align*}\Ext^1_{G/G_t}&(L(\lambda_1)^{[t]}\otimes V(\nu)^{[\sigma]},\Hom_{G_t}(L(\lambda_0),L(\mu_0)\otimes H^0(\nu))\otimes L(\mu_1)^{[t]})\\
&=\Ext^1_{G/G_t}(L(\lambda_1)^{[t]}\otimes V(\nu)^{[\sigma]},L(\mu_1)^{[t]})\otimes\Hom_{G_t}(L(\lambda_0),L(\mu_0)\otimes H^0(\nu))\\
&=\Ext^1_{G}(L(\lambda_1)\otimes V(\nu)^{[r/2-t]},L(\mu_1))\otimes\Hom_{G_t}(L(\lambda_0),L(\mu_0)\otimes H^0(\nu))\end{align*} This shows the first reidentification.

For the second, we consider $\Ext^1_{G}(L(\lambda_1)\otimes V(\nu)^{[r/2-t]},L(\mu_1))$ and run the LHS spectral sequence corresponding to $G_{r/2-t}\triangleleft G$. The $E^{1,0}$ term is
\[\Ext^1_{G/G_{r/2-t}}(V(\nu)^{[r/2-t]},\Hom_{G_{r/2-t}}(L(\lambda_1),L(\mu_1)))\]

This is zero unless $\lambda_1=\mu_1$ and then it has trivial $G$-structure. Thus this term (and equally $E^{2,0}_2$) is trivial, and $\Ext^1_{G}(L(\lambda_1)\otimes V(\nu)^{[r/2-t]},L(\mu_1))\cong \Hom_{G/G_{r/2-t}}(V(\nu)^{[r/2-t]},\Ext^1_{G_{r/2-t}}(L(\lambda_1),L(\mu_1)))$, giving the other identification.\end{proof}
 
 \begin{corollary}\label{cor} With the hypotheses of the proposition, if either of the following hold:
 \begin{enumerate}\item $\Ext^1_{G_{r/2-t}}(L(\lambda_1),L(\mu_1))=0$;
 \item $\Hom_G(L(\lambda_0),L(\mu_0)\otimes H^0(\nu))=0$ for all $\nu\in \Gamma'$\end{enumerate}
 then $\Ext^1_{G(\sigma)}(L(\lambda),L(\mu))\cong \Ext^1_{G}(L(\lambda),L(\mu))$\end{corollary}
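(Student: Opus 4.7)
The corollary should drop out of Proposition \ref{prop} once one observes that under either hypothesis the error term $R$ vanishes.

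First, I would invoke the explicit description of $R$ furnished by Proposition \ref{prop}(b): each summand (indexed by $\nu\in\Gamma'$) is a tensor product of $\Hom_G(V(\nu)^{[r/2-t]},\Ext^1_{G_{r/2-t}}(L(\lambda_1),L(\mu_1)))$ with $\Hom_G(L(\lambda_0),L(\mu_0)\otimes H^0(\nu))$. Hypothesis (i) forces the first of these two factors to vanish for every $\nu$, while hypothesis (ii) forces the second to vanish for every $\nu\in\Gamma'$; either way $R=0$. Feeding this back into Proposition \ref{prop}(a) yields an embedding $\Ext^1_{G(\sigma)}(L(\lambda),L(\mu))\hookrightarrow \Ext^1_G(L(\lambda),L(\mu))$.

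To upgrade the embedding to an isomorphism, I would identify it (by tracing the proof of Proposition \ref{prop}(a) back through Theorem \ref{isom} and the filtration of $\G_\Omega(k)$) as a one-sided inverse of the natural restriction map $\Ext^1_G(L(\lambda),L(\mu))\to \Ext^1_{G(\sigma)}(L(\lambda),L(\mu))$; the point is that the $\nu=0$ section of the filtration of $\G_\Omega(k)$ is the trivial $G$-module $k$, and under the isomorphism of Theorem \ref{isom} the inclusion $k\hookrightarrow \G_\Omega(k)$ induces exactly the restriction map. Equivalently, splitting off the trivial section as $0\to k\to \G_\Omega(k)\to W\to 0$ and applying $\Hom_G(L(\lambda),L(\mu)\otimes -)$ produces a four-term exact sequence whose flanks are $\Hom_G(L(\lambda),L(\mu)\otimes W)$ and $\Ext^1_G(L(\lambda),L(\mu)\otimes W)$; both are controlled by the same spectral-sequence reduction used to prove Proposition \ref{prop}(b), and both vanish under either hypothesis.

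The main obstacle is this last vanishing step for the Hom flank: the right-hand $\Ext^1$ flank is literally $R$, but the $\Hom$ flank requires a parallel analysis in which one runs the $G_t\triangleleft G$ Lyndon--Hochschild--Serre argument on $\Hom_G(L(\lambda)\otimes V(\nu)^{[\sigma]},L(\mu)\otimes H^0(\nu))$. The $E_2^{0,0}$ term here is controlled by the same high-weight comparison as in Proposition \ref{prop}(b) (with $E_2^{0,1}$ replaced by $E_2^{0,0}$), and Lemma \ref{lemma} again forces triviality of the relevant $G$-structures, so the argument goes through verbatim. Once the Hom flank is also shown to vanish, the middle map of the four-term sequence is an isomorphism and the corollary follows.
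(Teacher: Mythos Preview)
Your deduction that either hypothesis kills $R$ via the tensor-product description in Proposition~\ref{prop}(b) is exactly right, and feeding $R=0$ into part~(a) gives $\dim\Ext^1_{G(\sigma)}(L(\lambda),L(\mu))\leq\dim\Ext^1_G(L(\lambda),L(\mu))$. The only remaining issue is the reverse inequality, and here you work harder than necessary.

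The paper states the corollary with no proof; the intended one-line completion is simply to quote the injectivity of restriction
\[
\Ext^1_G(L(\lambda),L(\mu))\hookrightarrow\Ext^1_{G(\sigma)}(L(\lambda),L(\mu)),
\]
a standard fact that the paper invokes elsewhere (in the proof of Theorem~\ref{ext}) as \cite[2.1(c)]{SinF4}. Comparing dimensions then finishes immediately.

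Your four-term exact-sequence route is also valid and, in effect, re-proves that restriction is injective in the present range. However, your justification for the vanishing of the $\Hom$ flank is slightly off: it is not the ``same high-weight comparison'' from the $E_2^{0,1}$ analysis in Proposition~\ref{prop}(b) that does the work. Rather, for each $\nu\in\Gamma'$ one applies $G_\sigma\trianglelefteq G$ together with Lemma~\ref{lemma} to obtain
\[
\Hom_G\bigl(L(\lambda)\otimes V(\nu)^{[\sigma]},\,L(\mu)\otimes H^0(\nu)\bigr)\;\cong\;\Hom_G(V(\nu),k)\otimes\Hom_G\bigl(L(\lambda),L(\mu)\otimes H^0(\nu)\bigr)=0,
\]
since $\Hom_G(V(\nu),k)=0$ for $\nu\neq 0$. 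Note this vanishing holds unconditionally (given the standing hypotheses of the proposition), not just under (i) or (ii); so your approach recovers the restriction-injectivity statement rather than using the corollary's hypotheses a second time.
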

 
The following is an analogue of \cite[Thm.~5.4]{BNP06} showing that generically, self-extensions for the Ree groups between simple modules vanish.

\begin{theorem}\label{self}Let $r=2s+1$ be odd with $s\geq 9$. Then \[\Ext^1_{G(\sigma)}(L(\lambda),L(\lambda))=0\] for all $\lambda\in X_\sigma$.\end{theorem}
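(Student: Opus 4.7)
The plan is to combine Proposition \ref{prop}(a) with the classical vanishing $\Ext^1_G(L(\lambda),L(\lambda))=0$ and then use Proposition \ref{prop}(b) to kill the remainder term $R$ via a weight argument.

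First, Proposition \ref{prop}(a) embeds $\Ext^1_{G(\sigma)}(L(\lambda),L(\lambda))$ into $\Ext^1_G(L(\lambda),L(\lambda))\oplus R$. The summand $\Ext^1_G(L(\lambda),L(\lambda))$ vanishes by a standard fact of the representation theory of connected reductive algebraic groups---simple modules admit no nontrivial self-extensions (see \cite[II.2.12]{Jan03}). So the theorem reduces to proving $R=0$.

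Next, since $s\geq 9$, I pick $t$ with $5\leq t\leq s-4$ and decompose $\lambda=\lambda_0+p^t\lambda_1$ in accordance with Proposition \ref{prop}(b). Using the first expression for $R$ there, each summand takes the form
\[\Ext^1_G(L(\lambda_1)\otimes V(\nu)^{[r/2-t]},L(\lambda_1))\otimes \Hom_G(L(\lambda_0),L(\lambda_0)\otimes H^0(\nu))\]
for $\nu\in\Gamma'$. I would show the first tensor factor vanishes for every such $\nu$. After adjunction (passing to $\Ext^1_G(V(\nu)^{[r/2-t]},L(\lambda_1)^*\otimes L(\lambda_1))$) and a Lyndon--Hochschild--Serre spectral sequence for $G_{r/2-t}\triangleleft G$, this reduces to a weight inequality: the weights of $\Ext^1_{G_{r/2-t}}(L(\lambda_1),L(\lambda_1))$, being $(\sigma')^*$-twisted (with $\sigma'=\tau\circ F^{s-t}$), pair against $\alpha_0^\vee$ below an explicit bound analogous to \cite[(5.2.4)]{BNP06}, whereas the highest weight of $V(\nu)^{[r/2-t]}$ is $(\sigma')^*\nu$ with $\nu\in\Gamma'$ nonzero, placing it above this bound once $s$ is sufficiently large. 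The two bounds are incompatible, forcing the Hom to vanish and hence $R=0$.

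The main obstacle is adapting the weight bound of \cite[(5.2.4)]{BNP06} to the half-kernel $G_{(s-t)+1/2}=G_{r/2-t}$, as that reference is stated for classical Frobenius kernels only. I would handle this either by running an inner LHS spectral sequence for $G_{s-t}\triangleleft G_{r/2-t}$ (using $\tau^2=F$) to reduce to the classical case, or by redoing the BNP06 argument directly in the half-kernel setting. Either route determines both the hypothesis $s\geq 9$ and the allowed range for $t$; with such a bound in hand, the weight comparison closes the proof.
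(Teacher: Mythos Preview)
Your reduction to showing $R=0$ is fine, and your LHS step for $G_{r/2-t}\triangleleft G$ is exactly the second reidentification already recorded in Proposition~\ref{prop}(b), so you may simply quote that: it remains to show
\[
\Hom_G\bigl(V(\nu)^{[r/2-t]},\Ext^1_{G_{r/2-t}}(L(\lambda_1),L(\lambda_1))\bigr)=0\quad\text{for all }\nu\in\Gamma'.
\]
The gap is in your proposed weight argument. A bound of the shape \cite[(5.2.4)]{BNP06}, even once adapted to the half-kernel $G_{r/2-t}$, only says that the $G$-weights $(\sigma')^*\gamma$ of $\Ext^1_{G_{r/2-t}}(L(\lambda_1),L(\lambda_1))$ satisfy something like $\langle\gamma,\alpha_0^\vee\rangle\lesssim h$. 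But $\nu\in\Gamma'$ ranges over weights with $\langle\nu,\alpha_0^\vee\rangle<2(h-1)$, so the two ranges overlap: the bound does \emph{not} place $(\sigma')^*\nu$ ``above'' the possible $(\sigma')^*\gamma$, and increasing $s$ does not separate them (both sides scale by the same twist). So the weight comparison does not close.

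What the paper does instead is prove the stronger statement that $\Ext^1_{G_{r/2-t}}(L(\lambda_1),L(\lambda_1))=0$ outright, which immediately kills $R$ via Corollary~\ref{cor}(i). This vanishing is obtained by the inner LHS you mention, for $G_{s-t}\triangleleft G_{r/2-t}$, but the point is that each page-two term vanishes for specific structural reasons you do not invoke: the $E_2^{0,1}$ term dies because $\Ext^1_{G_{s-t}}(L(\lambda_0'),L(\lambda_0'))=0$ by \cite[II.12.9]{Jan03} (self-extensions over classical Frobenius kernels vanish when $G$ is not of type $C_n$), and the $E_2^{1,0}$ term dies because $\Ext^1_{G_{1/2}}(L(\lambda_1'),L(\lambda_1'))=0$ by Sin's computation \cite[1.7(1)(2),4.5]{Sin94}. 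Neither of these is a weight-bound consequence; they are the essential extra inputs, and without them the argument does not go through.
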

\begin{proof}Since $G$ is not of type $C_n$, we have $\Ext^1_{G_s}(L(\lambda),L(\lambda))$ for any $\lambda\in X_s$ by \cite[II.12.9]{Jan03}.

We wish to extend this result by replacing $s$ with $r/2$. When $r=1$ the result follows from \cite[1.7(1)(2),4.5]{Sin94}. Otherwise write $\lambda=\lambda_0+p^s\lambda_1$ with $\lambda_1\in X_{1/2}$ and apply the LHS spectral sequence corresponding to $G_s\triangleleft G_{r/2}$. The $E_2^{1,0}$ term is isomorphic to
\[\Ext^1_{G_{1/2}}(L(\lambda_1),\Hom_{G_s}(L(\lambda_0),L(\lambda_0))^{[-s]}\otimes L(\lambda_1))^{[s]}\cong \Ext^1_{G_{1/2}}(L(\lambda_1),L(\lambda_1))^{[s]}=0,\] by Sin's result. The $E_2^{0,1}$ term is isomorphic to \[\Hom_{G_{1/2}}(L(\lambda_1),\Ext^1_{G_s}(L(\lambda_0),L(\lambda_0))^{[-s]}\otimes L(\lambda_1))^{[s]}=0\] by \cite[II.12.9]{Jan03}.

The remainder of the argument in \cite[Thm.~5.4]{BNP06} goes through:
 Let $\lambda=\lambda_0+p^5\lambda_1$ with $\lambda_0\in X_5$ and $\lambda_1\in X_{r/2-5}$. Then by Proposition \ref{prop} (with $t=5$) it suffices to show that for all $\nu\in \Gamma'=\Gamma-\{0\}$  \[\Hom_G(V(\nu)^{[r/2-t]},\Ext^1_{G_{r/2-t}}(L(\lambda_1),L(\lambda_1)))\otimes \Hom_G(L(\lambda_0),L(\lambda_0)\otimes H^0(\nu))=0.\] By the above, we have $\Ext^1_{G_{r/2-t}}(L(\lambda_1),L(\lambda_1))=0$ so this follows.\end{proof}

Lastly we give a theorem relating $\Ext^1$s between simple $kG(\sigma)$-modules and $G$-modules.

\begin{theorem}\label{ext} Assume $r=2s+1$ with $s\geq 10$. Given $\lambda,\mu\in X_\sigma$, let \begin{align*}\lambda&=\sum_{i=0}^{r}(\tau^*)^i\lambda_{i/2}\\&=\lambda_0+\tau^*\lambda_{1/2}+p\lambda_1+p\tau^*\lambda_{3/2}+\dots+p^s\tau^*\lambda_{r/2}\end{align*} be the $\tau$-adic expansion of $\lambda$, and take a similar expression for $\mu$. Then there exists an integer $0\leq n<r$ such that \[\Ext^1_{G(\sigma)}(L(\lambda),L(\mu))\cong \Ext^1_G(L(\tilde\lambda),L(\tilde\mu))\]
where
\[\tilde\lambda=\sum_{i=0}^{n-1}(\tau^*)^i\lambda_{i/2+r/2-n/2}+\sum_{i=n}^{r}(\tau^*)^i\lambda_{i/2-n/2}.\]\end{theorem}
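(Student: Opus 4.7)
The plan is to adapt the arguments of \cite[Thm.~5.5, 5.6]{BNP06}, using Proposition~\ref{prop}(b) and Corollary~\ref{cor} as the main technical tools. The cyclic shift formula defining $\tilde\lambda$, $\tilde\mu$ is arranged so that $n=0$ yields $\tilde\lambda=\lambda$ and $\tilde\mu=\mu$; for general $n$ the shifted pair arises from the action of the special isogeny $\tau$ on $G(\sigma)$-modules. Since $\tau$ commutes with $\sigma=\tau^r$ on $G$, it restricts to an automorphism of $G(\sigma)$ of order dividing $r$, so twisting by $\tau^n$ gives an auto-equivalence of the $G(\sigma)$-module category; one has $L(\lambda)^{[\tau^n]}|_{G(\sigma)}\cong L(\tilde\lambda)|_{G(\sigma)}$ (after passing to the $\sigma$-restricted representative via Steinberg's theorem), so that $\Ext^1_{G(\sigma)}(L(\lambda),L(\mu))\cong\Ext^1_{G(\sigma)}(L(\tilde\lambda),L(\tilde\mu))$ for every $n\in\{0,\ldots,r-1\}$.

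With this flexibility in hand, the task reduces to finding $n$ for which Corollary~\ref{cor} applies to $(\tilde\lambda,\tilde\mu)$. I would apply Proposition~\ref{prop}(b) to the shifted pair with $t=s-4$, which is admissible because $s\geq 10$ yields $5\leq t\leq s-4$. Writing $\tilde\lambda=\tilde\lambda_0+p^t\tilde\lambda_1$ with $\tilde\lambda_0\in X_t$, $\tilde\lambda_1\in X_{r/2-t}$ (and similarly for $\tilde\mu$), Corollary~\ref{cor} produces
\[\Ext^1_{G(\sigma)}(L(\lambda),L(\mu))\cong\Ext^1_{G(\sigma)}(L(\tilde\lambda),L(\tilde\mu))\cong\Ext^1_G(L(\tilde\lambda),L(\tilde\mu))\]
whenever either $\Ext^1_{G_{r/2-t}}(L(\tilde\lambda_1),L(\tilde\mu_1))=0$ or $\Hom_G(L(\tilde\lambda_0),L(\tilde\mu_0)\otimes H^0(\nu))=0$ for every $\nu\in\Gamma'$. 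If no $n$ achieved such a vanishing, both conditions would fail for every cyclic shift: the $\Ext^1$-nonvanishing would rigidly constrain the high digits via \cite[II.12.9]{Jan03}, and the $\Hom$-nonvanishing the low digits via the explicit bound $\langle\nu,\alpha_0^\vee\rangle<2(h-1)$ defining $\Gamma$. A pigeonhole count over the $r$ cyclic shifts then yields a contradiction in the style of \cite[Thm.~5.6]{BNP06}. The diagonal case $\lambda=\mu$ is handled separately by Theorem~\ref{self} together with \cite[II.12.9]{Jan03}, which force vanishing on both sides, so any $n$ is admissible.

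The main obstacle is verifying the explicit match between the twisting operation by $\tau^n$ and the cyclic shift formula in the theorem statement: because $\sigma=\tau\circ F^s$ mixes the special isogeny with Frobenius and acts asymmetrically on long versus short roots of $F_4$, carefully tracking the half-integer indices of the $\tau$-adic expansion through Steinberg reduction is the most delicate piece of bookkeeping. A secondary technical point is ensuring that the range $5\leq t\leq s-4$ is wide enough to accommodate the pigeonhole step, which is precisely why the hypothesis is strengthened from $s\geq 9$ in Theorem~\ref{self} to $s\geq 10$ here.
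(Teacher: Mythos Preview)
Your overall shape is right: use that $\tau$ restricts to an automorphism of $G(\sigma)$ to replace $(\lambda,\mu)$ by any cyclic shift $(\tilde\lambda,\tilde\mu)$ without changing $\Ext^1_{G(\sigma)}$, then apply Proposition~\ref{prop}(b) to the shifted pair and try to kill $R$. The injection $\Ext^1_G\hookrightarrow\Ext^1_{G(\sigma)}$ from \cite{SinF4} then upgrades the resulting inclusion to an equality. So far this matches the paper.

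The gap is in how you kill $R$. Your appeal to a ``pigeonhole count over the $r$ cyclic shifts'' is not the mechanism used in \cite[Thm.~5.6]{BNP06}, and as stated it is not a proof: you have not said what is being pigeonholed, nor why failure of both vanishing conditions for every shift is contradictory. The conditions in Corollary~\ref{cor} involve all of $\tilde\lambda_0,\tilde\mu_0$ (many $\tau$-digits) and all of $\tilde\lambda_1,\tilde\mu_1$ at once, so there is no obvious counting argument.

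The paper's route is direct and avoids any pigeonhole. Since $\lambda\neq\mu$, some $\tau$-digit differs; choose $n$ so that the shifted weights differ precisely in the digit at position $t-\tfrac12$ (the paper takes $t=6$, so position $5+\tfrac12$). Now write $\tilde\lambda_0=\lambda'+p^{t-1/2}\lambda''$ with $\lambda''\in X_{1/2}$ that single digit, and similarly for $\tilde\mu_0$. Lemma~\ref{lemma} (applied with $r/2$ replaced by $t-\tfrac12$) shows that for every $\nu\in\Gamma'$ the factor
\[
\Hom_G\bigl(L(\tilde\lambda_0),L(\tilde\mu_0)\otimes H^0(\nu)\bigr)
\cong \Hom_G\bigl(L(\lambda''),L(\mu'')\bigr)\otimes\Hom_{G_{t-1/2}}\bigl(L(\lambda'),L(\mu')\otimes H^0(\nu)\bigr),
\]
and the first tensorand vanishes because $\lambda''\neq\mu''$. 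Hence $R=0$ outright, and Corollary~\ref{cor}(ii) (or rather the same conclusion drawn directly from Proposition~\ref{prop}) finishes. This is the missing idea in your proposal: isolate a single differing $\tau$-digit at the top of $\tilde\lambda_0$ and peel it off with Lemma~\ref{lemma}, rather than hoping a counting argument forces one of the coarse vanishing conditions.
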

\begin{proof}If $\lambda=\mu$ the result follows from Theorem \ref{self} with $n=0$. Note that as $V^{[\sigma]}=V^{[r/2]}\cong_{G(\sigma)} V$ for any $G(\sigma)$ module $V$, it follows from Steinberg's tensor product theorem and examination of the $\tau$-adic expansion of $\lambda$ and $\tilde\lambda$ that $L(\tilde\lambda)\cong_{G(\sigma)} L(\lambda)^{[n/2]}$.  By \cite[2.1(c)]{SinF4} there is an injection $\Ext^1_G(L(\tilde\lambda),L(\tilde\mu))\hookrightarrow \Ext^1_{G(\sigma)}(L(\tilde\lambda),L(\tilde\mu))$ and since $\tau$ is an automorphism of $G(\sigma)$, we have $\Ext^1_{G(\sigma)}(L(\tilde\lambda),L(\tilde\mu))\cong \Ext^1_{G(\sigma)}(L(\lambda),L(\mu))$. Thus it suffices to show (by dimensions) that there is also an injection $\Ext^1_{G(\sigma)}(L(\tilde\lambda),L(\tilde\mu))\hookrightarrow \Ext^1_G(L(\tilde\lambda),L(\tilde\mu))$

Assume $\lambda\neq \mu$. Then there exists $0\leq i\leq r$ such that $\lambda_{i/2}\neq \mu_{i/2}$. Choose $n$ such that $\tilde\lambda_{5+1/2}\neq \tilde\mu_{5+1/2}$. Write $\tilde\lambda=\lambda'+p^{5+1/2}\lambda''+p^{6}\lambda'''$ with $\lambda'\in X_{5+1/2}$, $\lambda''\in X_{1/2}$ and $\lambda'''\in X_{r/2-6}$, and take a similar expression for $\mu$.

Since $5\leq 6\leq s-4$, Proposition \ref{prop} applies with $t=6$. Thus \[\Ext^1_{G(\sigma)}(L(\tilde\lambda),L(\tilde\mu))\hookrightarrow \Ext^1_G(L(\tilde\lambda),L(\tilde\mu))\oplus R,\] where $R$ is isomorphic to \[\bigoplus_{\nu\in \Gamma'}\Ext^1_G(L(\lambda''')\otimes V(\nu)^{[r/2-6]},L(\mu'''))\otimes \Hom_G(L(\lambda')\otimes L(\lambda'')^{[5+1/2]},L(\mu')\otimes L(\mu'')^{[5+1/2]}\otimes H^0(\nu)),\]where $\Gamma'=\Gamma-\{0\}$.

From Lemma \ref{lemma} we get 
\begin{align*}
\Hom_G(&L(\lambda')\otimes L(\lambda'')^{[5+1/2]},L(\mu')\otimes L(\mu'')^{[5+1/2]}\otimes H^0(\nu))\\
&\cong \Hom_{G/G_{5+1/2}}(L(\lambda'')^{[5+1/2]},\Hom_{G_{5+1/2}}(L(\lambda'),L(\mu')\otimes H^0(\nu))\otimes L(\mu'')^{[5+1/2]})\\
&\cong \Hom_{G}(L(\lambda''),L(\mu''))\otimes \Hom_{G_{5+1/2}}(L(\lambda'),L(\mu')\otimes H^0(\nu)).\end{align*}
Since $\lambda''\neq \mu''$ we get $R=0$ and so the theorem follows.
\end{proof}

\bibliographystyle{amsalpha}
\bibliography{bib}

\newcommand{\etalchar}[1]{$^{#1}$}
\providecommand{\bysame}{\leavevmode\hbox to3em{\hrulefill}\thinspace}
\providecommand{\MR}{\relax\ifhmode\unskip\space\fi MR }
\providecommand{\MRhref}[2]{%
  \href{http://www.ams.org/mathscinet-getitem?mr=#1}{#2}
}
\providecommand{\href}[2]{#2}
\begin{thebibliography}{BNP{\etalchar{+}}12}

\bibitem[BNP04a]{BNPtw}
Christopher~P. Bendel, Daniel~K. Nakano, and Cornelius Pillen, \emph{Extensions
  for finite groups of {L}ie type: twisted groups}, Finite groups 2003, Walter
  de Gruyter GmbH \& Co. KG, Berlin, 2004, pp.~29--46. \MR{2125064
  (2006a:20024)}

\bibitem[BNP04b]{BNP04-Frob}
\bysame, \emph{Extensions for {F}robenius kernels}, J. Algebra \textbf{272}
  (2004), no.~2, 476--511. \MR{2028069 (2004m:20089)}

\bibitem[BNP06]{BNP06}
\bysame, \emph{Extensions for finite groups of {L}ie type. {II}. {F}iltering
  the truncated induction functor}, Representations of algebraic groups,
  quantum groups, and {L}ie algebras, Contemp. Math., vol. 413, Amer. Math.
  Soc., Providence, RI, 2006, pp.~1--23. \MR{2262362 (2007i:20074)}

\bibitem[BNP{\etalchar{+}}12]{BNPPSS}
Christopher~P. Bendel, Daniel~K. Nakano, Brian Parshall, Cornelius Pillen,
  Leonard~L. Scott, and David.~I. Stewart, \emph{Bounding cohomology for finite
  groups and {F}robenius kernels}, arxiv:1208.6333 (2012).

\bibitem[Jan03]{Jan03}
Jens~Carsten Jantzen, \emph{Representations of algebraic groups}, second ed.,
  Mathematical Surveys and Monographs, vol. 107, American Mathematical Society,
  Providence, RI, 2003. \MR{MR2015057 (2004h:20061)}

\bibitem[Sin92]{Sin92}
Peter Sin, \emph{Extensions of simple modules for {${\rm Sp}_4(2^n)$} and
  {${\rm Suz}(2^m)$}}, Bull. London Math. Soc. \textbf{24} (1992), no.~2,
  159--164. \MR{1148676 (93b:20025)}

\bibitem[Sin93]{Sin93}
\bysame, \emph{Extensions of simple modules for {$G_2(3^n)$} and
  {${}^2G_2(3^m)$}}, Proc. London Math. Soc. (3) \textbf{66} (1993), no.~2,
  327--357. \MR{1199070 (94a:20020)}

\bibitem[Sin94a]{SinF4}
\bysame, \emph{The cohomology in degree {$1$} of the group {$F_4$} in
  characteristic {$2$} with coefficients in a simple module}, J. Algebra
  \textbf{164} (1994), no.~3, 695--717. \MR{1272111 (95c:20063)}

\bibitem[Sin94b]{Sin94}
\bysame, \emph{Extensions of simple modules for special algebraic groups}, J.
  Algebra \textbf{170} (1994), no.~3, 1011--1034. \MR{1305273 (95i:20066)}

\end{thebibliography}

\end{document}